\DeclareMathOperator{\res}{res} 
\DeclareMathOperator{\diag}{diag} 
\theoremstyle{plain}
	\newtheorem{thm}{Theorem}
	\newtheorem{lemma}[thm]{Lemma}
\theoremstyle{remark}
	\newtheorem{remark}[thm]{Remark}
\theoremstyle{definition}
\title{On Chern classes of tensor products of vector bundles}
\author{Zsolt Szilágyi}
\address{Babe\c s-Bolyai University, Faculty of Mathematics and Computer Science, Kog\u alniceanu Street 1, 
400084, Cluj-Napoca, Romania
}
\email{szilagyi.zsolt@math.ubbcluj.ro}
\begin{document}
\begin{abstract}
We present two  formulas for  Chern classes of the tensor product of two vector bundles. 
In the first formula we consider a matrix containing Chern classes of the first bundle and we take a polynomial of this matrix with Chern classes of the second bundle as coefficients. The determinant of this expression equals the Chern polynomial of the tensor product.
In the second formula we express the total Chern class of the tensor product as resultant of two explicit polynomials with coefficient involving Chern classes of each vector bundles.  This approach leads to determinantal formulas for the total Chern class of the second symmetric and alternating product of a vector bundle. 
\end{abstract}

\maketitle

\section{Introduction}

\subsection{Chern classes of vector bundles}
\label{s:Chern classes}
One associates  a series of cohomological (characteristic) classes $c_{i}(\mathcal{E})\in H^{2i}(M)$ called the $i^{th}$ \emph{Chern class} of $\mathcal{E}$, for all $i=1,\dots ,r$, 
with a complex vector bundle $\mathcal{E}$ of rank $r$ over a manifold $M$ 
 (cf.~\cite[Ch.~IV]{bott-tu-book} or \cite[Ch.~I, \S4]{hirzebruch1978topological}). 
One can arrange them into a polynomial  $c(\mathcal{E};t) = 1 + c_{1}(\mathcal{E})t + \dots + c_{r}(\mathcal{E})t^{r}$, called the \emph{Chern polynomial}.   Its value $c(\mathcal{E};1) =  1 + c_{1}(\mathcal{E}) + \dots + c_{r}(\mathcal{E})$ at $t=1$ is the \emph{total Chern class}.  
In the sequel we only consider complex vector bundles over the fixed manifold $M$.

We recall some properties of the Chern classes. 
If $\mathcal{F}$ is another vector bundle  then by the \emph{Whitney product formula} $c(\mathcal{E} \oplus \mathcal{F};t) =  c(\mathcal{E};t)\cdot c(\mathcal{F};t)$ (cf. \cite[(20.10.3)]{bott-tu-book}).
Computing with Chern classes one can pretend by the \emph{Splitting Principle} (cf. \cite[Ch.~IV, \S21]{bott-tu-book}) that the bundle $\mathcal{E}$ of rank $r$ splits into direct sum of $r$ complex line bundles   and the first Chern classes $\alpha_{1},\dots ,
\alpha_{r}$ of these hypothetical line bundles are the so-called \emph{Chern roots} of $\mathcal{E}$. Hence, by the Whitney product formula $c(\mathcal{E};t) = \prod_{i=1}^{r}(1 + \alpha_{i}t)$, thus $c_{k}(\mathcal{E}) = e_{k}(\alpha) = e_{k}(\alpha_{1},\dots ,\alpha_{r})=\sum_{1\leq i_{1}< \dots < i_{k}\leq r} \alpha_{i_{1}}\cdots \alpha_{i_{k}}$, $k=1,\dots ,r$, i.e. the Chern classes  are \emph{elementary symmetric polynomials} of the Chern roots. 

The Chern polynomial does not behave so well for tensor product like for the direct sum. 
Nevertheless, for complex line bundles $\mathcal{L}$ and $\mathcal{L}'$ we have 
$c_{1}(\mathcal{L} \otimes \mathcal{L}') = c_{1}(\mathcal{L}) + c_{1}(\mathcal{L}')$ (cf. \cite[(20.1)]{bott-tu-book}). 
Hence, if $\alpha_{1},\dots ,\alpha_{r}$ and $\beta_{1},\dots ,\beta_{q}$ are Chern roots of $\mathcal{E}$ and $\mathcal{F}$, respectively, then $\alpha_{i} + \beta_{j}$, $i=1,\dots,r$, $j=1,\dots ,q$ are the Chern roots of the tensor product $\mathcal{E}\otimes \mathcal{F}$ and the Chern polynomial of the tensor product equals 
\begin{equation}\label{eq-Chern-tensor-def}
c(\mathcal{E}\otimes \mathcal{F};t) = \prod_{i=1}^{r}\prod_{j=1}^{q}(1 + \alpha_{i}t + \beta_{j}t).
\end{equation}
The goal is to express \eqref{eq-Chern-tensor-def} in terms of 
Chern classes of $\mathcal{E}$ and $\mathcal{F}$, or equivalently in terms of elementary symmetric polynomials of $\alpha$'s and $\beta$'s, respectively.

\subsection{Existing computing methods}
\label{ss:existing methods}
There are several approaches to compute the Chern classes of the tensor product. We mention   the four approaches compared in \cite{iena2016different}. 

The first method computes the Chern classes of the tensor product by eliminating Chern roots $\alpha_{1},\dots ,\alpha_{r},\beta_{1},\dots ,\beta_{q}$ from $c(\mathcal{E}\otimes \mathcal{F}) = \prod_{i=1}^{r} \prod_{j=1}^{q} (1 + \alpha_{i} + \beta_{j})$ using relations $c_{i}(\mathcal{E}) = e_{i}(\alpha_{1},\dots ,\alpha_{r})$ and $c_{j}(\mathcal{F}) = e_{j}(\beta_{1},\dots ,\beta_{q})$ for $i=1,\dots ,r$ and $j=1,\dots ,q$. The second approach uses the multiplicativity of the Chern character (cf.~\cite[Ch.~III, \S10.1]{hirzebruch1978topological}) and Newton's identities (cf.~\cite[($2.11'$)]{macdonald1995symmetric}). The third uses Lascoux's formula \cite{lascoux1978classes} which expresses the Chern classes of the tensor product as linear combination of products of Schur polynomials of $\mathcal{E}$ and $\mathcal{F}$. The last approach is Manivel's formula \cite{manivel2016chern}, which is a variation of Lascoux's formula computing the coefficients differently. These methods have been implemented in the library \textsc{chern.lib} \cite{iena2016chernlib} for the computer algebra system \textsc{Singular} \cite{Singular4-1-2}.

\section{First approach}

\begin{lemma}\label{lm-0}
Let $s$, $u_{1},\dots ,u_{r}$, $v_{1},\dots,v_{q}$ be formal variables and 
 $e_{k}(u) = \sum_{1\leq i_{1}< \dots < i_{k}\leq r} u_{i_{1}}\cdots u_{i_{k}}$ is the $k^{th}$ elementary symmetric polynomial. 
We associate with  $e(v) = (e_{1}(v),\dots ,e_{q}(v))$ the following matrix
\begin{equation}\label{eq-Lambda}
\Lambda(e(v)) 
= 
 \begin{pmatrix} 
 e_{1}(v) & -1 &  & 
 \\
 \vdots & &\ddots & 
 \\
 e_{q-1}(v) & &  & -1
 \\
e_{q}(v) &  &  & 
  \end{pmatrix}
\end{equation} 
(it has non-zero entries only in the first column and above the diagonal).
Then we have 
$$
\prod_{i=1}^{r} \prod_{j=1}^{q} (s+u_{i}+v_{j})
=
\det\left( \sum_{k=0}^{r} e_{k}(u)[sI+\Lambda(e(v))]^{r-k} \right),
$$
where $I=I_{q}$ is the $q$-by-$q$ identity matrix.

\end{lemma}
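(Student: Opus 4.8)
The plan is to recognize $\Lambda(e(v))$ as a companion-type matrix whose characteristic polynomial is $\prod_{j=1}^{q}(x-v_{j})$, and then to invoke the spectral-mapping property of the determinant.

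First I would compute $\det(xI-\Lambda(e(v)))$. Apart from the superdiagonal, the only nontrivial entries of $xI-\Lambda(e(v))$ sit in the first column, so I would expand along that column. The minor obtained by deleting row $i$ and column $1$ splits as a block upper-triangular matrix whose diagonal carries a run of $1$'s followed by a run of $x$'s, hence equals $x^{q-i}$; collecting the cofactor signs gives
\begin{equation*}
\det\bigl(xI-\Lambda(e(v))\bigr) = \sum_{k=0}^{q}(-1)^{k}e_{k}(v)\,x^{q-k} = \prod_{j=1}^{q}(x-v_{j}),
\end{equation*}
the last equality being Vieta's formula. Thus $\Lambda(e(v))$ has "eigenvalues" $v_{1},\dots ,v_{q}$, and $sI+\Lambda(e(v))$ has characteristic polynomial $\prod_{j=1}^{q}(x-s-v_{j})$.

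Next I would set $P(y)=\prod_{i=1}^{r}(y+u_{i})=\sum_{k=0}^{r}e_{k}(u)\,y^{r-k}$, so that the matrix on the right-hand side of the lemma is exactly $P\bigl(sI+\Lambda(e(v))\bigr)$. The key input is the identity $\det P(A)=\prod_{j}P(\mu_{j})$, valid for any square matrix $A$ whose characteristic polynomial factors as $\prod_{j}(x-\mu_{j})$: over a field one triangularizes $A$, and then $P(A)$ is triangular with diagonal entries $P(\mu_{1}),\dots ,P(\mu_{q})$. Applying this with $A=sI+\Lambda(e(v))$ and $\mu_{j}=s+v_{j}$ yields
\begin{equation*}
\det\left(\sum_{k=0}^{r}e_{k}(u)\,[sI+\Lambda(e(v))]^{r-k}\right) = \prod_{j=1}^{q}P(s+v_{j}) = \prod_{i=1}^{r}\prod_{j=1}^{q}(s+u_{i}+v_{j}),
\end{equation*}
which is the assertion.

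The one point requiring care is that $s,u_{i},v_{j}$ are formal variables, so $\Lambda(e(v))$ lives over a polynomial ring rather than over a field; I would handle this by a standard polynomial-identity argument, specializing the $v_{j}$ to pairwise distinct complex numbers (a Zariski-dense locus), for which $sI+\Lambda(e(v))$ is honestly diagonalizable, making the triangularization step literally a diagonalization. Since both sides of the claimed equality are polynomials in $s,u_{1},\dots ,u_{r},v_{1},\dots ,v_{q}$ with integer coefficients, agreement on this dense set forces the identity in general. I expect only this formal bookkeeping, together with the sign-chasing in the cofactor expansion of the first step, to be mildly delicate; the conceptual content is simply "companion matrix plus spectral mapping".
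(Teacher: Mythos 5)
Your argument is correct, and it reaches the same underlying spectral fact as the paper --- that $\Lambda(e(v))$ has eigenvalues $v_{1},\dots ,v_{q}$ --- but by a genuinely different mechanism. The paper never computes the characteristic polynomial: it exhibits an explicit eigenvector matrix $E=\big[e_{i-1}(v_{1},\ldots ,\widehat{v}_{j},\ldots ,v_{q})\big]_{i,j=1}^{q}$, shows $\det E=\prod_{i<j}(v_{i}-v_{j})\neq 0$ and $\Lambda(e(v))E=E\diag(v_{1},\dots ,v_{q})$, and then carries out the determinant computation via the conjugation $\sum_{k}e_{k}(u)\,E\diag(s+v_{1},\dots ,s+v_{q})^{r-k}E^{-1}$; this keeps everything constructive and entirely inside the field of rational functions in the formal variables, so no specialization is needed. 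You instead recognize $\Lambda(e(v))$ as a companion-type matrix, compute $\det(xI-\Lambda(e(v)))=\sum_{k}(-1)^{k}e_{k}(v)x^{q-k}$ by cofactor expansion (your minor is in fact block \emph{diagonal}, with a unit lower-triangular block followed by an upper-triangular block with $x$'s, but its value $x^{q-i}$ and the resulting signs are right), then invoke $\det P(A)=\prod_{j}P(\mu_{j})$ via triangularization over $\mathbb{C}$, and finally transfer back to formal variables by a Zariski-density/polynomial-identity argument. What your route buys is economy of invention: only standard facts about companion matrices and the spectral mapping of determinants, with the formal-variable issue handled by a routine density argument. What the paper's route buys is a fully explicit similarity $\Lambda(e(v))=E\diag(v)E^{-1}$ (of some independent interest, together with the identity $e_{i}(v)=e_{i}(v\setminus v_{j})+v_{j}e_{i-1}(v\setminus v_{j})$ and the Vandermonde-type evaluation of $\det E$), and a proof that works verbatim over any coefficient ring without appealing to algebraic closure or dense specialization. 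Both proofs are complete; yours is a legitimate alternative.
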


\begin{proof}
First, we diagonalize the matrix $\Lambda(e(v))$. Therefore, we consider the $q$-by-$q$ matrix 
$$
E=E(v_{1},\dots ,v_{q}) = \big[ e_{i-1}(v_{1},\ldots ,\widehat{v}_{j},\ldots ,v_{q}) \big]_{i,j=1}^{q},
$$
where $e_{0}(v_{1},\dots ,\widehat{v}_{j},\dots ,v_{q})=1$ and $\widehat{v}_{j}$ means that $v_{j}$ is omitted. We show that $E$ is non-singular by computing its determinant as follows.
We subtract the first column from the other columns, then we raise a $(v_{1}-v_{j})$-factor from columns $j=2,\dots ,q$, respectively.
Expanding the resulting determinant by the first row we get the recurrent relation $\det(E(v_{1},\dots ,v_{q})) = \prod_{j=2}^{q}(v_{1}-v_{j}) \det(E(v_{2}, \dots , v_{q}))$, hence  $\det(E) =  \prod_{1\leq i<j \leq q}(v_{i} - v_{j})\neq 0$. 
Moreover, 
$$
\Lambda(e(v)) E = E\,\textnormal{diag}(v_{1},\dots ,v_{q})
$$ 
by relations $e_{i} (v_{1},\dots , v_{q})  = e_{i}(v_{1},\ldots ,\widehat{v}_{j},\ldots ,v_{q}) + v_{j}e_{i-1}(v_{1},\ldots ,\widehat{v}_{j},\ldots ,v_{q})$, hence $\Lambda(e(v))=E \diag(v_{1},\dots ,v_{q}) E^{-1}$.
Furthermore, $sI+\Lambda(e(v)) =sI+E \diag(v_{1},\dots ,v_{q})E^{-1} = E \diag(s+v_{1},\ldots ,s+v_{q}) E^{-1}$ is also diagonalizable with eigenvalues $s+v_{1},\ldots ,s+v_{q}$.
Finally,
\begin{gather*}
\prod_{j=1}^{q}\prod_{i=1}^{r}(s+u_{i}+v_{j})
=
\prod_{j=1}^{q}\sum_{k=0}^{r} e_{k}(u) (s+v_{j})^{r-k}
=
\\
=
\det \bigg( E \diag \bigg( \sum_{k=0}^{r} e_{k}(u) (s + v_{1})^{r-k},\dots ,\sum_{k=0}^{r} e_{k}(u)(s+v_{q})^{r-k} \bigg) E^{-1} \bigg)
=
\\
=
\det \bigg(  \sum_{k=0}^{r} e_{k}(u) E\diag \big(  s + v_{1},\dots ,s+v_{q} \big)^{r-k} E^{-1} \bigg)
=
\det \bigg(  \sum_{k=0}^{r} e_{k}(u) [sI+\Lambda(e(v))]^{r-k} \bigg).
\end{gather*}
\end{proof}

\begin{thm}\label{thm-Chern-formula-1}
Let $\mathcal{E}$ and $\mathcal{F}$ be two complex vector bundles of rank $r$ and $q$, respectively. 
The Chern polynomial of the tensor product $\mathcal{E}\otimes \mathcal{F}$ equals
\begin{equation}\label{eq-tensor-Chern}
c(\mathcal{E}\otimes \mathcal{F};t)
=
\det \Bigg( \sum_{k=0}^{r} c_{k}(\mathcal{E})t^{k}[I+\Lambda(c(\mathcal{F});t)]^{r-k} \Bigg),
\end{equation}
where 
$c_{0}(\mathcal{E}) = 1$ and $\Lambda(c(\mathcal{F});t)$ is the matrix \eqref{eq-Lambda} with $c_{1}(\mathcal{F})t,\dots ,c_{q}(\mathcal{F})t^{q}$ in the first column.
\end{thm}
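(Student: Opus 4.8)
The plan is to obtain the theorem as an essentially immediate consequence of Lemma~\ref{lm-0}, via the Splitting Principle together with a degree\nobreakdash-tracking substitution. First I would invoke the Splitting Principle to assume that $\mathcal{E}$ and $\mathcal{F}$ split as direct sums of line bundles with Chern roots $\alpha_{1},\dots ,\alpha_{r}$ and $\beta_{1},\dots ,\beta_{q}$, so that $c_{k}(\mathcal{E}) = e_{k}(\alpha)$ and $c_{j}(\mathcal{F}) = e_{j}(\beta)$ (with $c_{0}(\mathcal{E}) = 1$), and, by \eqref{eq-Chern-tensor-def}, $c(\mathcal{E}\otimes\mathcal{F};t) = \prod_{i=1}^{r}\prod_{j=1}^{q}(1 + \alpha_{i}t + \beta_{j}t)$.

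Next I would specialize the polynomial identity of Lemma~\ref{lm-0} by setting $s = 1$, $u_{i} = \alpha_{i}t$ for $i = 1,\dots ,r$, and $v_{j} = \beta_{j}t$ for $j = 1,\dots ,q$. Since $e_{k}$ is homogeneous of degree $k$, this gives $e_{k}(u) = e_{k}(\alpha_{1}t,\dots ,\alpha_{r}t) = t^{k}e_{k}(\alpha) = c_{k}(\mathcal{E})t^{k}$ and likewise $e_{j}(v) = c_{j}(\mathcal{F})t^{j}$; in particular the first column of $\Lambda(e(v))$ becomes $\big(c_{1}(\mathcal{F})t,\dots ,c_{q}(\mathcal{F})t^{q}\big)$, so $\Lambda(e(v))$ turns into $\Lambda(c(\mathcal{F});t)$ and $sI + \Lambda(e(v))$ into $I + \Lambda(c(\mathcal{F});t)$. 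The left-hand side of the lemma becomes $\prod_{i,j}(1 + \alpha_{i}t + \beta_{j}t) = c(\mathcal{E}\otimes\mathcal{F};t)$, while the right-hand side becomes $\det\big(\sum_{k=0}^{r} c_{k}(\mathcal{E})t^{k}[I + \Lambda(c(\mathcal{F});t)]^{r-k}\big)$, which is precisely \eqref{eq-tensor-Chern}; note that the $k = 0$ summand matches because $e_{0}(u) = 1 = c_{0}(\mathcal{E})$.

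The only point deserving a word of care is that Lemma~\ref{lm-0} is proved over the polynomial ring in the formal variables $s, u_{1},\dots ,u_{r}, v_{1},\dots ,v_{q}$ (its diagonalization step uses the nonvanishing of the Vandermonde-type determinant $\det E$, which is a generic statement), whereas we are substituting cohomology classes. This is harmless: the identity of Lemma~\ref{lm-0} is an equality of polynomials with integer coefficients, and both sides are symmetric separately in the $u$'s and in the $v$'s, so it may be rewritten as an identity in $\mathbb{Z}[e_{1}(u),\dots ,e_{r}(u),e_{1}(v),\dots ,e_{q}(v)]$ and then evaluated in any commutative ring, in particular in $H^{*}(M)[t]$ (or in the cohomology ring of the associated full flag bundle where the Chern roots live), since no step invokes invertibility or any special feature of the target. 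As \eqref{eq-tensor-Chern} involves only the classes $c_{k}(\mathcal{E})$ and $c_{j}(\mathcal{F})$, the Splitting Principle then returns the formula on $M$. I do not expect any genuinely hard step here — the substance is entirely in Lemma~\ref{lm-0}; the only thing to get right in the theorem itself is the homogeneity bookkeeping that produces the powers $t^{k}$ and the identification of the specialized matrix $\Lambda$.
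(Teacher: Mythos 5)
Your proposal is correct and follows the same route as the paper: reduce via the Splitting Principle to an identity in the Chern roots and then specialize Lemma~\ref{lm-0} at $s=1$, $u_{i}=\alpha_{i}t$, $v_{j}=\beta_{j}t$, using homogeneity of $e_{k}$ to produce the factors $t^{k}$. Your extra remark justifying the substitution of cohomology classes into the formal polynomial identity is a sound (and slightly more careful) elaboration of a point the paper leaves implicit.
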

\begin{proof}
Let $\alpha_{1},\dots ,\alpha_{r}$ and $\beta_{1},\dots ,\beta_{q}$ be the Chern roots of  $\mathcal{E}$ and $\mathcal{F}$, respectively. Then it is enough to show that
$
\prod_{i=1}^{r} \prod_{j=1}^{q}(1 + \alpha_{i}t + \beta_{j}t) 
= 
\det \left( \sum_{k=0}^{r} e_{k}(\alpha)t^{k}[I+\Lambda(e(\beta t))]^{r-k} \right),
$
where $\Lambda(e(\beta t))$ equals the matrix $\Lambda(c(\mathcal{F});t)$ only replacing Chern classes $c_{j}(\mathcal{F})$  by elementary symmetric polynomials $e_{j}(\beta) = e_{j}(\beta_{1},\dots ,\beta_{q})$ of Chern roots for all $j=1,\dots ,q$. Finally, substituting $s=1$, $u_{1}=\alpha_{1}t,\dots ,u_{r}=\alpha_{r}t$, $v_{1} = \beta_{1} t,\dots ,v_{q} = \beta_{q} t$ in Lemma \ref{lm-0} yields the desired relation.
\end{proof}

\section{Second approach: resultant and Chern classes of the tensor product}

Our second approach uses resultant of two polynomials. This will lead us to a determinantal formula for Chern classes of second alternating and symmetric products.

\subsection{Resultant}

Let $A(t) = a_{r} + a_{r-1} t + \dots + a_{0}t^{r} = a_{0} \prod_{i=1}^{r}(t-\alpha_{i})$  and $B(t) = b_{q} + b_{q-1} t + \dots + b_{0} t^{q} = b_{0} \prod_{j=1}^{q}(t-\beta_{j})$ two polynomials in the variable $t$ with roots $\alpha_{1},\dots ,\alpha_{r}$ and $\beta_{1},\dots ,\beta_{q}$, respectively. The \emph{resultant} of polynomials $A$ and $B$ with respect to variable $t$ is given by 
\begin{align*}
\res(A(t),B(t),t)  &{}= a_{0}^{q} b_{0}^{r} \prod_{i=1}^{r} \prod_{j=1}^{q}(\alpha_{i}-\beta_{j}) 
=
a_{0}^{q} \prod_{i=1}^{r} B(\alpha_{i}) 
= 
b_{0}^{r} \prod_{j=1}^{q} A(\beta_{j}) 
\\
&{}=
\begin{vmatrix}
a_{0} &  &  &   &  b_{0} &  &  & 
\\
a_{1} & a_{0} &    &   & b_{1} & b_{0} &  & 
\\
\vdots &  \vdots & \ddots &  & \vdots & \vdots  & \ddots & 
\\
a_{r} & a_{r-1} & &  a_{0}   & b_{q} & b_{q-1} &  & b_{0}
\\
 & a_{r} &  \ddots & \vdots  &  & b_{q} & \ddots & \vdots 
\\
 &  & \ddots & a_{r-1} &   &  & \ddots & b_{q-1} 
\\
 &  &  &    a_{r} &    &  &   & b_{q} 
\end{vmatrix}
\end{align*}
where the first $q$ columns contain the coefficients of $A$, while the last $r$ columns contain the coefficients of $B$ and empty spaces contain zeroes (cf.~\cite[Ch.~III]{cox2005using}).

\subsection{Two polynomials}
Instead of the Chern polynomial $c(\mathcal{F};t) = 1 + c_{1}(\mathcal{F})t+ \dots + c_{q}(\mathcal{F})t^{q}$  
of the rank $q$ vector bundle $\mathcal{F}$
we consider the polynomial with coefficients in  reverse order
\begin{equation}\label{eq-poly-C}
C(\mathcal{F};t) 
= \sum_{k=0}^{q} c_{k}(\mathcal{F}) t^{q-k} 
= c_{q}(\mathcal{F}) + c_{q-1}(\mathcal{F}) t + \dots + c_{1}(\mathcal{F})t^{q-1} + t^{q}. 
\end{equation}
They are related  by $C(\mathcal{F};t) = t^{q} c(\mathcal{F}; 1/t)$ and moreover, we can recover the total Chern class by substituting $t=1$, i.e. $c(\mathcal{F}) = C(\mathcal{F};1)$. Furthermore, if $\beta_{1},\dots,\beta_{q}$ are Chern roots of $\mathcal{F}$ then 
$
C(\mathcal{F};t) = \prod_{j=1}^{q}(t+\beta_{j})
$, 
i.e. the opposite of Chern roots of $\mathcal{F}$ are roots of the polynomial $C(\mathcal{F};t)$.

We also consider another polynomial (depending on an additional parameter $s$) associated with the Chern classes of the vector bundle $\mathcal{E}$ of rank $r$.
Denote  
\begin{equation}\label{eq-poly-D}
D(\mathcal{E};s,t) 
= 
(-1)^{r}d_{r}(\mathcal{E};s) + (-1)^{r-1}d_{r-1}(\mathcal{E};s) t + \dots + d_{0}(\mathcal{E};s)t^{r}
=
\sum_{k=0}^{r} (-1)^{k} d_{k}(\mathcal{E};s) t^{r-k}
\end{equation}
with coefficients 
\begin{equation}\label{eq-coeffs-ds}
d_{k}(\mathcal{E};s) = \binom{r}{k} s^{k} + \binom{r-1}{k-1} c_{1}(\mathcal{E}) s^{k-1} + \dots 
+ c_{k}(\mathcal{E})
=
\sum_{i=0}^{k} \binom{r-i}{k-i}c_{i}(\mathcal{E}) s^{k-i}.
\end{equation}
Remark that $d_{k}(\mathcal{E};0) = c_{k}(\mathcal{E})$, thus $D$ is a generalization of $C$, more precisely, we have $C(\mathcal{E};t) = (-1)^{r} D(\mathcal{E};0,-t)$ and moreover, $(-1)^{r}D(\mathcal{E};s,0)=d_{r}(\mathcal{E};s) = C(\mathcal{E};s)$. 
\begin{lemma}\label{Lm-1}
If $\alpha_{1},\dots,\alpha_{r}$ are the Chern roots of $\mathcal{E}$ then 
$
D(\mathcal{E};s,t) = \prod_{i=1}^{r}(t - s - \alpha_{i}).
$
\end{lemma}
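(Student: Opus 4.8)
The plan is to expand the polynomial $D(\mathcal{E};s,t) = \sum_{k=0}^{r} (-1)^{k} d_{k}(\mathcal{E};s) t^{r-k}$ by substituting the explicit formula \eqref{eq-coeffs-ds} for the coefficients and reorganizing the double sum so that the binomial coefficients reassemble into a power of a binomial. Concretely, I would write
\[
D(\mathcal{E};s,t) = \sum_{k=0}^{r} (-1)^{k} t^{r-k} \sum_{i=0}^{k} \binom{r-i}{k-i} c_{i}(\mathcal{E}) s^{k-i},
\]
then swap the order of summation to sum over $i$ first (with $i$ from $0$ to $r$ and $k$ from $i$ to $r$), and reindex with $\ell = k - i$. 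The inner sum over $\ell$ from $0$ to $r-i$ becomes $\sum_{\ell=0}^{r-i} \binom{r-i}{\ell} (-1)^{\ell+i} s^{\ell} t^{r-i-\ell}$, which by the binomial theorem is $(-1)^{i}(t - s)^{r-i}$. Hence $D(\mathcal{E};s,t) = \sum_{i=0}^{r} (-1)^{i} c_{i}(\mathcal{E}) (t-s)^{r-i}$.

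At this point I recognize the right-hand side as the polynomial $C(\mathcal{E}; \cdot)$ evaluated appropriately, or better yet I go directly to Chern roots. Using $c_{i}(\mathcal{E}) = e_{i}(\alpha_{1},\dots,\alpha_{r})$ from the Splitting Principle, I have
\[
D(\mathcal{E};s,t) = \sum_{i=0}^{r} (-1)^{i} e_{i}(\alpha) (t-s)^{r-i} = \prod_{i=1}^{r}\big((t-s) - \alpha_{i}\big),
\]
where the last equality is the standard identity $\prod_{i}(x - \alpha_i) = \sum_{i} (-1)^i e_i(\alpha) x^{r-i}$ applied with $x = t-s$. This is exactly the claimed factorization $\prod_{i=1}^{r}(t - s - \alpha_{i})$.

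Alternatively — and this may be the cleanest route — I would avoid the double-sum manipulation entirely by invoking the relation already noted in the text just before the lemma, namely $C(\mathcal{E};t) = (-1)^{r} D(\mathcal{E};0,-t)$, together with the known factorization $C(\mathcal{E};t) = \prod_{j}(t+\beta_j)$ from the previous subsection, and then checking that $D(\mathcal{E};s,t)$ depends on $s$ and $t$ only through the combination $t - s$. The $s$-dependence claim is precisely what the binomial reorganization above establishes ($d_k(\mathcal{E};s)$ are built so that $D$ is a shift of $C$), so the two approaches amount to the same computation; I would present the direct expansion since it is self-contained.

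The only real obstacle is bookkeeping: getting the sign conventions and the index ranges right when swapping the order of summation and reindexing, since $D$ carries alternating signs $(-1)^k$ in its definition and the target product $\prod(t-s-\alpha_i)$ carries its own signs. I would double-check by testing $r=1$: there $d_0 = 1$, $d_1 = s + c_1(\mathcal{E})$, so $D(\mathcal{E};s,t) = t - (s + c_1(\mathcal{E})) = t - s - \alpha_1$, which matches. With the sign accounting pinned down, the proof is a two-line computation.
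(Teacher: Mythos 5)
Your proof is correct, and it runs the computation in the opposite direction from the paper. The paper starts from the product side: it expands $\prod_{i=1}^{r}(t-s-\alpha_{i})=\sum_{k}(-1)^{k}e_{k}(s+\alpha_{1},\dots,s+\alpha_{r})t^{r-k}$ and then identifies $e_{k}(s+\alpha_{1},\dots,s+\alpha_{r})$ with $d_{k}(\mathcal{E};s)$ by a combinatorial count — expanding each $(s+\alpha_{i_{1}})\cdots(s+\alpha_{i_{k}})$ and observing that each monomial of $e_{i}(\alpha)$ occurs in exactly $\binom{r-i}{k-i}$ of the $k$-element subsets. You instead start from the definition of $D$, swap the double sum, and let the binomial theorem collapse it to $\sum_{i=0}^{r}(-1)^{i}c_{i}(\mathcal{E})(t-s)^{r-i}$, then finish with the standard factorization $\sum_{i}(-1)^{i}e_{i}(\alpha)x^{r-i}=\prod_{i}(x-\alpha_{i})$ at $x=t-s$. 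The two arguments rest on the same binomial identity, so neither is materially shorter, but yours makes explicit the structural fact that $D(\mathcal{E};s,t)$ depends on $(s,t)$ only through $t-s$ (equivalently $D(\mathcal{E};s,t)=(-1)^{r}C(\mathcal{E};s-t)$), which is a nice way to see why the lemma must hold, whereas the paper's subset-counting derivation is what directly motivates the definition \eqref{eq-coeffs-ds} of the coefficients $d_{k}$. Your index bookkeeping (the reindexing $\ell=k-i$ and the sign $(-1)^{\ell+i}$) is accurate, and the $r=1$ sanity check is consistent.
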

\begin{proof}
Indeed, $\prod_{i=1}^{r}(t - s - \alpha_{i})= \sum_{k=0}^{r} (-1)^{k} e_{k}(s+\alpha_{1},\dots ,s+\alpha_{r}) t^{r-k}$ and
\begin{gather*}
\label{eq-computation-1}
e_{k}(s+\alpha_{1},\dots , s+\alpha_{r}) 
= 
\sum_{1\leq i_{1}<\dots <i_{k}\leq r} (s + \alpha_{i_{1}}) \cdots (s + \alpha_{i_{k}})=
\\
\nonumber
=
\sum_{1\leq i_{1}<\dots <i_{k}\leq r} \left[ s^{k} + e_{1}(\alpha_{i_{1}},\dots ,\alpha_{i_{k}})s^{k-1} +\dots + e_{k}(\alpha_{i_{1}},\dots ,\alpha_{i_{k}})  \right]
=
\\
\nonumber
=\binom{r}{k} s^{k} + \binom{r-1}{k-1}  e_{1}(\alpha_{1},\dots ,\alpha_{r}) s^{k-1}
+ \dots + \binom{r-k}{0} e_{k} (\alpha_{1},\dots,\alpha_{r})
=
\\
\nonumber
=
\sum_{i=0}^{r} \binom{r-i}{k-i}  e_{i}(\alpha_{1},\dots ,\alpha_{r}) s^{k-i}
=
\sum_{i=0}^{k} \binom{r-i}{k-i}  c_{i}(\mathcal{E}) s^{k-i}
= d_{k}(\mathcal{E};s).
\qedhere
\end{gather*} 
\end{proof}

\subsection{Chern classes of the tensor product via resultant}

In the next theorem we express $C(\mathcal{E}\otimes \mathcal{F};s)$ as resultant of polynomials $D(\mathcal{E}; s,t)$ and $C(\mathcal{F};t)$.
 We can also get a formula for the total Chern class of the tensor product by substituting $s=1$ into the formula. 
\begin{thm}\label{thm-v1}
If $\mathcal{E}$ and $\mathcal{F}$ are two complex vector bundles of rank $r$ and $q$, respectively, then 
\begin{equation}\label{eq-C}
C(\mathcal{E} \otimes \mathcal{F}; s) = \res(D(\mathcal{E};s,t),C(\mathcal{F};t),t),
\end{equation}
where polynomials $C$ and $D$ are defined by \eqref{eq-poly-C} and \eqref{eq-poly-D}, respectively. Substituting $s=1$ yields
$$
c(\mathcal{E}\otimes \mathcal{F}) = \res(D(\mathcal{E};1,t),C(\mathcal{F};t),t).
$$
Moreover, the top Chern class of the tensor product equals
$$
c_{r+q}(\mathcal{E}\otimes \mathcal{F}) 
= 
\res((-1)^{r} C(\mathcal{F};-t), C(\mathcal{E};t),t) 
= 
\res(c(\mathcal{E};t),c(\mathcal{F};-t), t).
$$
\end{thm}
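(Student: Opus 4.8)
The plan is to pass to Chern roots via the splitting principle and then read everything off the product formula for the resultant recalled above. Let $\alpha_{1},\dots,\alpha_{r}$ and $\beta_{1},\dots,\beta_{q}$ be the Chern roots of $\mathcal{E}$ and $\mathcal{F}$. By Lemma~\ref{Lm-1}, $D(\mathcal{E};s,t)=\prod_{i=1}^{r}\big(t-(s+\alpha_{i})\big)$, so as a polynomial in $t$ it is monic of degree $r$ with roots $s+\alpha_{1},\dots,s+\alpha_{r}$; and by the identity recorded after \eqref{eq-poly-C}, $C(\mathcal{F};t)=\prod_{j=1}^{q}(t+\beta_{j})$ is monic of degree $q$ with roots $-\beta_{1},\dots,-\beta_{q}$. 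Both leading coefficients being $1$, the resultant formula gives
\[
\res\big(D(\mathcal{E};s,t),C(\mathcal{F};t),t\big)=\prod_{i=1}^{r}\prod_{j=1}^{q}\big((s+\alpha_{i})-(-\beta_{j})\big)=\prod_{i=1}^{r}\prod_{j=1}^{q}\big(s+\alpha_{i}+\beta_{j}\big).
\]

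Next I would identify the right-hand side with $C(\mathcal{E}\otimes\mathcal{F};s)$. The bundle $\mathcal{E}\otimes\mathcal{F}$ has rank $rq$ and its Chern roots are the $rq$ sums $\alpha_{i}+\beta_{j}$, so applying $C(\mathcal{G};s)=\prod_{\gamma}(s+\gamma)$ (product over the Chern roots $\gamma$ of $\mathcal{G}$) to $\mathcal{G}=\mathcal{E}\otimes\mathcal{F}$ yields exactly $\prod_{i,j}(s+\alpha_{i}+\beta_{j})$; since this identity between symmetric expressions in the Chern roots descends to an identity between the Chern classes of the bundles involved, \eqref{eq-C} follows. Putting $s=1$ and using $c(\mathcal{G})=C(\mathcal{G};1)$ gives the total-Chern-class formula at once. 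For the top Chern class I would set $s=0$: the top Chern class of a bundle is the product of all its Chern roots, so $c_{rq}(\mathcal{E}\otimes\mathcal{F})=\prod_{i,j}(\alpha_{i}+\beta_{j})=C(\mathcal{E}\otimes\mathcal{F};0)$, and substituting $s=0$ into \eqref{eq-C} together with $D(\mathcal{E};0,t)=(-1)^{r}C(\mathcal{E};-t)$ (the remark following \eqref{eq-coeffs-ds}) rewrites the resultant in the stated reversed-polynomial form; the roles of $\mathcal{E}$ and $\mathcal{F}$ may be swapped freely using $\mathcal{E}\otimes\mathcal{F}\cong\mathcal{F}\otimes\mathcal{E}$.

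The only genuinely delicate point — and the step I expect to be the main obstacle — is the last equality $c_{rq}(\mathcal{E}\otimes\mathcal{F})=\res(c(\mathcal{E};t),c(\mathcal{F};-t),t)$, because $c(\mathcal{E};t)=\prod_{i}(1+\alpha_{i}t)$ and $c(\mathcal{F};-t)=\prod_{j}(1-\beta_{j}t)$ are \emph{not} monic: their leading coefficients are $c_{r}(\mathcal{E})=\prod_{i}\alpha_{i}$ and $(-1)^{q}c_{q}(\mathcal{F})=(-1)^{q}\prod_{j}\beta_{j}$, and their roots are $-1/\alpha_{i}$ and $1/\beta_{j}$. I would feed this data into the prefactor-plus-product form $a_{0}^{q}b_{0}^{r}\prod(\text{root}-\text{root})$, pull the factors $1/\alpha_{i}$ and $1/\beta_{j}$ out of the double product, and check that the prefactor $c_{r}(\mathcal{E})^{q}c_{q}(\mathcal{F})^{r}$ cancels against $\prod_{i,j}\alpha_{i}\beta_{j}=\big(\prod_{i}\alpha_{i}\big)^{q}\big(\prod_{j}\beta_{j}\big)^{r}$ while all the signs collapse, leaving $\prod_{i,j}(\alpha_{i}+\beta_{j})$; one can also derive this from the behaviour of the resultant under reversing the coefficient order of both arguments. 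Since the resultant is polynomial in the coefficients, this is an identity of polynomials in the $\alpha$'s and $\beta$'s, so it suffices to verify it where no Chern root vanishes.
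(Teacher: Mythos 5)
Your proof is correct and takes essentially the same route as the paper: pass to Chern roots, note that $D(\mathcal{E};s,t)=\prod_{i}(t-s-\alpha_{i})$ and $C(\mathcal{F};t)=\prod_{j}(t+\beta_{j})$ are monic in $t$, and read \eqref{eq-C} off the product formula for the resultant; for the last equality the paper reverses the rows and columns of the Sylvester determinant, while your direct computation with the non-monic product formula plus the polynomiality/density remark is an equally valid substitute. One caveat: substituting $s=0$ gives $\res((-1)^{r}C(\mathcal{E};-t),C(\mathcal{F};t),t)$, i.e.\ $\res((-1)^{q}C(\mathcal{F};-t),C(\mathcal{E};t),t)$ after swapping the bundles, so your step does not literally yield the printed middle expression $\res((-1)^{r}C(\mathcal{F};-t),C(\mathcal{E};t),t)$ --- but that is a typo in the theorem statement (the exponent there should be $q$, the printed form differing by the sign $(-1)^{r(q+1)}$), not a gap in your argument.
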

\begin{proof}
Denote $\alpha_{1},\dots ,\alpha_{r}$ and $\beta_{1},\dots ,\beta_{q}$ the Chern roots of $\mathcal{E}$  and $\mathcal{F}$, respectively. Then 
$$
C(\mathcal{E}\otimes \mathcal{F};s) 
=
\prod_{i=1}^{r}\prod_{j=1}^{q} (s + \alpha_{i} + \beta_{j}) 
=
\prod_{i=1}^{r}\prod_{j=1}^{q} (s + \alpha_{i}  -(-\beta_{j}))
$$
and $C(\mathcal{E}\otimes \mathcal{F};s)$ is the resultant of polynomials $\prod_{i=1}^{r}(t-s-\alpha_{i}) = D(\mathcal{E};s,t)$  (cf.~Lemma \ref{Lm-1}) and $\prod_{j=1}^{q}(t+\beta_{j}) = C(\mathcal{F};t)$ with respect to the variable $t$, i.e. $C(\mathcal{E} \otimes \mathcal{F}; s) = \res(D(\mathcal{E};s,t),C(\mathcal{F};t),t)$.

To obtain the top Chern class first 
we reverse the roles of $\mathcal{E}$ and $\mathcal{F}$ and we substitute $s=0$ into \eqref{eq-C}. Finally, we reverse the orders of rows and columns in the defining determinant of the resultant. 
\end{proof}

\subsection{Chern classes  $c(\wedge^{2}\mathcal{E})$ and $c(S^{2}\mathcal{E})$}

We give a different version of Theorem \ref{thm-v1}, which leads to 
determinantal 
formulas for total Chern classes of the second alternating and symmetric product.
%
\begin{thm}\label{thm-v2}
If $\mathcal{E}$ and $\mathcal{F}$ are two complex vector bundles of rank $r$ and $q$, respectively, then 
$$
C(\mathcal{E} \otimes \mathcal{F};s) = \res ( D (\mathcal{E};s/2,t),(-1)^{q} D(\mathcal{F};s/2,-t), t),
$$
where the polynomial $D$ is defined in \eqref{eq-poly-D}.
By substituting $s=1$  we get
\begin{equation}
c(\mathcal{E}\otimes \mathcal{F}) = \res( D(\mathcal{E};1/2,t),(-1)^{q}D(\mathcal{F};1/2,-t),t).
\end{equation}
\end{thm}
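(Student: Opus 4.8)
The plan is to reduce to Chern roots exactly as in the proofs of Theorems~\ref{thm-v1} and~\ref{thm-v2}'s predecessor, and then massage the resultant identity by a symmetric shift of the parameter $s$. Let $\alpha_{1},\dots ,\alpha_{r}$ and $\beta_{1},\dots ,\beta_{q}$ be the Chern roots of $\mathcal{E}$ and $\mathcal{F}$. By definition of $C$ on the tensor product,
\begin{equation*}
C(\mathcal{E}\otimes\mathcal{F};s) = \prod_{i=1}^{r}\prod_{j=1}^{q}\bigl(s+\alpha_{i}+\beta_{j}\bigr)
= \prod_{i=1}^{r}\prod_{j=1}^{q}\Bigl(\bigl(\tfrac{s}{2}+\alpha_{i}\bigr)-\bigl(-\tfrac{s}{2}-\beta_{j}\bigr)\Bigr).
\end{equation*}
So I would write $s+\alpha_{i}+\beta_{j}$ as a difference of a quantity depending only on $i$ and a quantity depending only on $j$, splitting the parameter evenly as $s/2+s/2$; this is the one genuinely new idea beyond Theorem~\ref{thm-v1}, where the whole of $s$ was absorbed into the $\mathcal{E}$-side.

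Next I would identify the two one-variable polynomials whose roots are these quantities. By Lemma~\ref{Lm-1} applied with parameter $s/2$, the monic polynomial in $t$ with roots $s/2+\alpha_{i}$, $i=1,\dots ,r$, is $\prod_{i=1}^{r}(t-s/2-\alpha_{i}) = D(\mathcal{E};s/2,t)$. Similarly, the monic polynomial in $t$ with roots $-s/2-\beta_{j}$, $j=1,\dots ,q$, is $\prod_{j=1}^{q}\bigl(t+\tfrac{s}{2}+\beta_{j}\bigr)$; substituting $t\mapsto -t$ in Lemma~\ref{Lm-1} (again with parameter $s/2$) gives $D(\mathcal{F};s/2,-t) = \prod_{j=1}^{q}(-t-s/2-\beta_{j}) = (-1)^{q}\prod_{j=1}^{q}(t+s/2+\beta_{j})$, so that $(-1)^{q}D(\mathcal{F};s/2,-t)$ is exactly the monic polynomial with those roots. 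Both polynomials are monic in $t$ (leading coefficient $1$), so the product formula for the resultant in terms of roots has no spurious leading-coefficient powers, and we get directly
\begin{equation*}
\res\bigl(D(\mathcal{E};s/2,t),\,(-1)^{q}D(\mathcal{F};s/2,-t),\,t\bigr) = \prod_{i=1}^{r}\prod_{j=1}^{q}\Bigl(\bigl(\tfrac{s}{2}+\alpha_{i}\bigr)-\bigl(-\tfrac{s}{2}-\beta_{j}\bigr)\Bigr) = C(\mathcal{E}\otimes\mathcal{F};s).
\end{equation*}

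The substitution $s=1$ is then immediate since $C(\mathcal{E}\otimes\mathcal{F};1)=c(\mathcal{E}\otimes\mathcal{F})$, giving the displayed corollary; here I should note that dividing the parameter by $2$ is harmless because we work with $\mathbb{Q}$-coefficients (or simply treat $s$ as a formal variable and specialize at the end), and the resultant, being a polynomial in the coefficients, accommodates the half-integer shifts without trouble. The only point requiring a little care — and the step I would flag as the main potential pitfall — is bookkeeping the sign $(-1)^{q}$: one must check that it sits on the $\mathcal{F}$-polynomial and not get an extra factor from the resultant's antisymmetry, i.e. confirm that $\res(A,B,t)$ with both $A,B$ monic equals $\prod_{i,j}(\alpha_i-\beta_j)$ with that exact sign convention. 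Once the monic normalization and the $t\mapsto -t$ reflection are tracked correctly, the identity drops out with essentially no computation beyond Lemma~\ref{Lm-1}.
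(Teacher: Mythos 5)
Your proposal is correct and is essentially identical to the paper's proof: the same symmetric split $s+\alpha_i+\beta_j = (\tfrac{s}{2}+\alpha_i)-(-\tfrac{s}{2}-\beta_j)$, the same identification of the two monic polynomials $D(\mathcal{E};s/2,t)$ and $(-1)^{q}D(\mathcal{F};s/2,-t)$ via Lemma~\ref{Lm-1}, and the same root-product formula for the resultant. Your sign bookkeeping matches the paper's convention $\res(A,B,t)=a_0^q b_0^r\prod_{i,j}(\alpha_i-\beta_j)$, so nothing is missing.
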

\begin{proof}
If $\alpha_{1},\dots ,\alpha_{r}$ and $\beta_{1},\dots ,\beta_{q}$ are the Chern roots of $\mathcal{E}$ and $\mathcal{F}$, respectively, then 
$$
C(\mathcal{E} \otimes \mathcal{F}; s) 
= 
\prod_{i=1}^{r}\prod_{j=1}^{q}(s+\alpha_{i}+\beta_{j}) 
=
\prod_{i=1}^{r}\prod_{j=1}^{q}\left( \frac{s}{2} + \alpha_{i} -\left(- \frac{s}{2}-\beta_{j}\right)\right), 
$$
hence $C(\mathcal{E}\otimes \mathcal{F};s)$ is the resultant of polynomials $D(\mathcal{E};s/2,t) = \prod_{i=1}^{r}\left(t - \frac{s}{2} - \alpha_{i} \right)$ and 
\[
(-1)^{q}D(\mathcal{F};s/2,-t) 
= 
(-1)^{q}\prod_{i=1}^{q} \left(-t - \frac{s}{2} - \beta_{j} \right) 
= 
\prod_{i=1}^{q} \left(t + \frac{s}{2} + \beta_{j} \right). 
\qedhere
\]
\end{proof}

\subsubsection{Chern classes of the second alternating product $\wedge^{2}\mathcal{E} $ and second symmetric product $S^{2}\mathcal{E}$}
\label{ss:wedge and symmetric}

Let $\alpha_{1},\dots ,\alpha_{r}$ be the Chern roots of the vector bundle $\mathcal{E}$. Then 
$c(\wedge^{2}\mathcal{E}) = \prod_{1\leq i<j \leq r}(1+\alpha_{i}+\alpha_{j})$
and 
$c(S^{2}\mathcal{E}) = \prod_{1\leq i\leq j \leq r}(1+\alpha_{i}+\alpha_{j}) = c(\mathcal{E};2) c(\wedge^{2}\mathcal{E})$,  
hence
$
C(\wedge^{2} \mathcal{E}; s) = \prod_{1\leq i < j\leq r} (s + \alpha_{i} + \alpha_{j})
\textnormal{ and } 
C(S^{2} \mathcal{E}; s) = \prod_{1\leq i \leq j\leq r} (s + \alpha_{i} + \alpha_{j}) = 2^{r}C(\mathcal{E};s/2) C(\wedge^{2}\mathcal{E};s).
$

\begin{thm}
Let $\bar{d}_{k} = d_{k}(\mathcal{E};s/2) = \sum_{i=0}^{k} \binom{r-i}{k-i}c_{i}(\mathcal{E}) (s/2)^{k-i}$ for $k=0,1,\dots ,r$ and $\bar{d}_{k}=0$ otherwise. With these notations we have 
\begin{equation}\label{eq-wedge2}
C(\wedge^{2}\mathcal{E};s) 
=
\det\left(\left[d_{2i-j}(\mathcal{E};s/2) \right]_{i,j=1}^{r-1}\right)
= \begin{vmatrix}
\bar{d}_{1} & 1 &  &   &   &\\
\bar{d}_{3} & \bar{d}_{2} & \bar{d}_{1} & 1 &  
\\
\bar{d}_{5} & \bar{d}_{4} & \bar{d}_{3} & \bar{d}_{2} & \bar{d}_{1} & \\
\vdots & \ddots & \ddots & \ddots & \ddots & 
\ddots \\
&  & \bar{d}_{r} & \bar{d}_{r-1} & \bar{d}_{r-2} & \bar{d}_{r-3} 
\\
&&&  & \bar{d}_{r} & \bar{d}_{r-1}
\end{vmatrix}.
\end{equation}
By substituting $s=0$ we get $c(\wedge^{2}\mathcal{E}) = \det\left(\left[d_{2i-j}(\mathcal{E};1/2) \right]_{i,j=1}^{r-1}\right)$.
\end{thm}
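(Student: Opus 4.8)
The plan is to reduce the assertion to a single universal identity among symmetric functions and then to recognize that identity as a dual Jacobi--Trudi formula for a staircase shape. First I would introduce $\gamma_{i} = s/2 + \alpha_{i}$ for $i = 1,\dots ,r$; by Lemma~\ref{Lm-1} applied with parameter $s/2$ one has $\bar d_{k} = d_{k}(\mathcal{E};s/2) = e_{k}(\gamma_{1},\dots ,\gamma_{r})$ for all $k$, while the product formula for $C(\wedge^{2}\mathcal{E};s)$ recorded just before the theorem reads $C(\wedge^{2}\mathcal{E};s) = \prod_{1\le i<j\le r}(\gamma_{i}+\gamma_{j})$. Hence it suffices to prove, for formal variables $x_{1},\dots ,x_{r}$ and with the conventions $e_{0}=1$ and $e_{k}=0$ for $k<0$ or $k>r$, the identity
\[
\prod_{1\le i<j\le r}(x_{i}+x_{j}) = \det\big([\,e_{2i-j}(x_{1},\dots ,x_{r})\,]_{i,j=1}^{r-1}\big);
\]
specialising $x_{i}=\gamma_{i}$ then gives \eqref{eq-wedge2}, and the substitution $s=0$ afterwards yields the final claim.

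To prove this identity I would first rewrite the left-hand side as a bialternant:
\[
\prod_{1\le i<j\le r}(x_{i}+x_{j}) = \frac{\prod_{i<j}(x_{i}^{2}-x_{j}^{2})}{\prod_{i<j}(x_{i}-x_{j})} = \frac{\det\big(x_{i}^{\,2(r-j)}\big)_{i,j=1}^{r}}{\det\big(x_{i}^{\,r-j}\big)_{i,j=1}^{r}} = s_{\delta}(x_{1},\dots ,x_{r}),
\]
the Schur polynomial of the staircase partition $\delta = (r-1,r-2,\dots ,1,0)$. Since $\delta$ is self-conjugate, the dual (Nägelsbach--Kostka) form of the Jacobi--Trudi identity (cf.~\cite{macdonald1995symmetric}) gives $s_{\delta} = \det\big([\,e_{(r-i)-i+j}\,]_{i,j=1}^{r-1}\big) = \det\big([\,e_{r-2i+j}\,]_{i,j=1}^{r-1}\big)$. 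Finally, reversing simultaneously the order of the $r-1$ rows and the order of the $r-1$ columns sends the $(i,j)$ entry $e_{r-2i+j}$ to $e_{2i-j}$ and multiplies the determinant by $(-1)^{(r-1)(r-2)/2}\cdot(-1)^{(r-1)(r-2)/2}=1$, which produces exactly the asserted determinant.

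I expect no genuine obstacle here: the geometric content has already been distilled into the product formula for $C(\wedge^{2}\mathcal{E};s)$ and into Lemma~\ref{Lm-1}, and what remains is combinatorial bookkeeping, namely matching the index shift $2i-j$ used in the statement with the shift $\delta'_{i}-i+j$ coming out of the dual Jacobi--Trudi determinant, and checking that the simultaneous row/column reversal is sign-neutral. If a self-contained argument is preferred, the displayed symmetric-function identity can instead be obtained from the Lindström--Gessel--Viennot lattice-path model for the determinant $\det([e_{2i-j}])$, or by induction on $r$; but the Schur-polynomial route above is the shortest and makes the role of the staircase shape transparent.
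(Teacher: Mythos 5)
Your proof is correct, but it follows a genuinely different route from the paper. You reduce everything to the universal identity $\prod_{1\le i<j\le r}(x_i+x_j)=\det\big([e_{2i-j}(x)]_{i,j=1}^{r-1}\big)$ via $\gamma_i=s/2+\alpha_i$ and $\bar d_k=e_k(\gamma)$ (which is indeed what Lemma~\ref{Lm-1} gives), and then prove that identity by recognizing the left side as the bialternant form of the Schur polynomial $s_\delta$ of the staircase $\delta=(r-1,\dots,1,0)$ and applying the dual Jacobi--Trudi (N\"agelsbach--Kostka) formula, with a sign-neutral simultaneous reversal of rows and columns; the index bookkeeping $e_{r-2i+j}\mapsto e_{2i-j}$ and the sign $(-1)^{(r-1)(r-2)/2}\cdot(-1)^{(r-1)(r-2)/2}=1$ check out. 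The paper instead stays inside its resultant framework: it writes $C(\mathcal{E}\otimes\mathcal{E};s)=\res(D(\mathcal{E};s/2,t),(-1)^rD(\mathcal{E};s/2,-t),t)$ by Theorem~\ref{thm-v2}, manipulates the $2r\times 2r$ Sylvester determinant by column additions, factor extraction and row/column permutations until it becomes $2^r\bar d_r$ times the square of the determinant in \eqref{eq-wedge2}, and then identifies $C(\wedge^2\mathcal{E};s)$ using $C(\mathcal{E}\otimes\mathcal{E};s)=2^rC(\mathcal{E};s/2)\,C(\wedge^2\mathcal{E};s)^2$. Your argument buys two things: it bypasses the lengthy determinant manipulations, and it determines the answer exactly rather than through a square (the paper's identification of $C(\wedge^2\mathcal{E};s)$ from its square leaves a sign to be checked, which your Schur-function computation fixes outright); the cost is importing standard symmetric-function machinery from \cite{macdonald1995symmetric}, whereas the paper's computation is elementary and uniform with the rest of its second approach. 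One small point you inherited from the statement: the final substitution should be $s=1$ (so that $s/2=1/2$ and $c(\wedge^2\mathcal{E})=C(\wedge^2\mathcal{E};1)$), not $s=0$ as printed.
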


\begin{proof}
By Theorem \ref{thm-v2} we have $
C(\mathcal{E}\otimes \mathcal{E}; s) 
= \res(D(\mathcal{E};s/2,t),(-1)^{r}D(\mathcal{E};s/2,-t),t)
$, which equals 
\begin{equation}\label{eq-det1}
\begin{vmatrix}
1 & & 
&  1 & &
\\
-\bar{d}_{1} & \ddots & 
& 
\bar{d}_{1} & \ddots & 
\\
\bar{d}_{2} & \ddots & 1  
& 
\bar{d}_{2} & \ddots & 1
\\
\vdots  &  & -\bar{d}_{1} 
& \vdots &   &  \bar{d}_{1}   
\\
(-1)^{r}\bar{d}_{r}  & & \vdots 
& 
\bar{d}_{r} &   & \vdots
\\
 &  \ddots &  (-1)^{r-1}\bar{d}_{r-1}
&& \ddots & \bar{d}_{r-1} 
\\
& & (-1)^{r}\bar{d}_{r} 
&&& \bar{d}_{r} 
\end{vmatrix}
\end{equation}
We add the $(r+i)^{th}$ column  to the $i^{th}$ column, then we subtract the $1/2$ of the  $i^{th}$ column from the $(r+i)^{th}$ column for all $i=1,\dots,r$. 
This results  the  determinant on the left hand side of \eqref{eq-det2}. 
From the first $r$ columns we get a $2^{r}$ factor. Then we switch the $(2i)^{th}$   and $(r+2i)^{th}$ columns for all $1 \leq  i \leq r/2$. This yields the determinant on the right hand side of \eqref{eq-det2}, which has zeroes in the even and odd rows of the first and last $r$ columns, respectively.
\begin{equation}\label{eq-det2}
\begin{vmatrix}
2 & & &
&  0 & &
\\
0 & 2 &   & 
& 
\bar{d}_{1} & 0 &  & 
\\
2\bar{d}_{2} & 0 &  \ddots &   
& 
0  & \bar{d}_{1}  & \ddots & 
\\
0  & 2\bar{d}_{2} &  \ddots  & 2 
& \bar{d}_{3} &  0 & \ddots &  0
\\
\vdots  & 0  & \ddots  & 0
& 
\vdots & \bar{d}_{3}  & \ddots & \bar{d}_{1}
\\
 & \vdots  & &  2\bar{d}_{2}
& 
 & \vdots & & 0 
\\
& & \ddots & \vdots 
& & & \ddots  & \vdots 
\end{vmatrix}
=
(-1)^{\lfloor \frac{r}{2} \rfloor}2^{r}
\begin{vmatrix}
1 & & 
&   & 0 &
\\
0 & 0 &   & 
& 
\bar{d}_{1} & 1 & 
\\
\bar{d}_{2} & \bar{d}_{1} & 1 &   
& 0  & 0 & 0 & 
\\
0 & 0  & 0  & \ddots
 & \bar{d}_{3} &  \bar{d}_{2}  &  \bar{d}_{1}   
& \ddots
\\
\vdots & \bar{d}_{3} & \bar{d}_{2} & \ddots 
 & \vdots & 0 & 0 & \ddots
\\
 & \vdots   &  0
& 
\ddots &  & \vdots  & \bar{d}_{3} & \ddots
\\
& & \vdots & \ddots
&&  & \vdots & \ddots 
\end{vmatrix}.
\end{equation}
Moving the odd rows up and the even rows down yields a $2$-by-$2$ block determinant with zeroes in the off-diagonal blocks and a $(-1)^{r(r-1)/2}$-sign, which cancels the existing $(-1)^{\lfloor r/2 \rfloor}$-sign.
We expand this determinant with respect to the first and last rows. 
These rows contain  only zeroes except the first row has $1$ in the first column  and the last row has $\bar{d}_{r}$ in the last column. After expansion the two diagonal block  become identical, hence
\begin{equation*}
\\
2^{r}\begin{vmatrix}
1 & & & & & 0  & 0 & 0 & \cdots & 0
\\
\bar{d}_{2} & \bar{d}_{1} & 1 & &  & 0 & & &  & 0
\\
\bar{d}_{4} & \bar{d}_{3} & \bar{d}_{2} & \ddots
& \vdots  & \vdots &&& & \vdots 
\\
\vdots & \vdots &  \vdots & \ddots & \bar{d}_{r-3} & 0 & & &  & 0 
\\
 &  &  &   & \bar{d}_{r-1}  & 0 & 0 &  0 & \cdots & 0
\\
0 & 0 & 0 & \cdots &  0 
&
\bar{d}_{1} & 1 &  
\\
0 &  &  &  & 0 & 
\bar{d}_{3} & \bar{d}_{2} & \bar{d}_{1} & 
\\
\vdots  & &   &  &   \vdots & \vdots & \bar{d}_{4} & \bar{d}_{3} & \ddots & \vdots
\\
0 &  &  &   &  0 & & \vdots & \vdots & \ddots & \bar{d}_{r-2} 
\\
0 & 0 &  0&  \cdots &  0 & & & &  &\bar{d}_{r}
\end{vmatrix}
=
2^{r} \bar{d}_{r}
\begin{vmatrix}
\bar{d}_{1} & 1 &  &   &   &\\
\bar{d}_{3} & \bar{d}_{2} & \bar{d}_{1} & 1 &  
\\
\bar{d}_{5} & \bar{d}_{4} & \bar{d}_{3} & \bar{d}_{2} & \bar{d}_{1} & \\
\vdots & \ddots & \ddots & \ddots & \ddots & 
 \\
&  & d_{r} & \bar{d}_{r-1} & \bar{d}_{r-2} & \bar{d}_{r-3} 
\\
&&&  & \bar{d}_{r} & \bar{d}_{r-1}
\end{vmatrix}^{2}.
\end{equation*}
Note that $C(\mathcal{E};s/2) = \bar{d}_{r} = d_{r}(\mathcal{E};s/2)$. Finally, by the relation 
$$
C(\mathcal{E}\otimes \mathcal{E};s) 
= 
C(\wedge^{2}\mathcal{E} \oplus S^{2}\mathcal{E};s) 
= 
C(\wedge^{2}\mathcal{E};s) C( S^{2}\mathcal{E};s) 
= 
2^{r}C(\mathcal{E};s/2) C(\wedge^{2}\mathcal{E};s)^{2}
$$ 
we are able to identify the $C(\wedge^{2}\mathcal{E};s)$ part in $C(\mathcal{E}\otimes \mathcal{E};s)$ to be \eqref{eq-wedge2}.
\end{proof}

\begin{remark}
We can also compute $C(\wedge^{r-2}\mathcal{E}; s)$ from $C(\wedge^{2}\mathcal{E}; s)$ by the duality
\begin{gather*}
C(\wedge^{r-2}\mathcal{E}; s) = \prod_{1\leq i_{1}<
\dots < i_{r-2} \leq r} (s + \alpha_{i_{1}}+\dots +\alpha_{i_{r-2}}) = \prod_{1\leq j_{1}< j_{2} \leq r} (s + c_{1}(\mathcal{E}) - \alpha_{j_{1}} - \alpha_{j_{2}})
\\
= \sum_{k=0}^{r(r-1)/2} (s + c_{1}(\mathcal{E}))^{r(r-1)/2-k} (-1)^{k} c_{k}(\wedge^{2}\mathcal{E})
=
(-1)^{r(r-1)/2}C(\wedge^{2}\mathcal{E}; -(s + c_{1}(\mathcal{E}))).
\end{gather*}
\end{remark}

\subsection{Resultant of monic polynomials}

Comparing the two formulas for the Chern class of the tensor product leads to another formula for the resultant of two polynomials, when one of them is monic.
\begin{thm}
Let $A(t) = \sum_{k=0}^{r}a_{k}t^{r-k}$ and $B(t) = \sum_{\ell=0}^{q} b_{\ell} t^{q-\ell}$ with $b_{0}=1$, i.e.~$B$ is monic. Then
$$
\res(A(t),B(t),t) = \det\left( \sum_{k=0}^{r}(-1)^{k}a_{k}\Lambda(b)^{r-k} \right),
$$
where $\Lambda(b)$ equals the matrix \eqref{eq-Lambda} with coefficients $b_{1},\dots ,b_{q}$ of $B$ in the first column.
\end{thm}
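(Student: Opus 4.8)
The plan is to deduce the statement by specializing the two Chern-class formulas to arbitrary polynomials, exploiting the fact that Chern roots are formal and the formulas hold as polynomial identities. First I would observe that Lemma~\ref{lm-0} already contains the essential computation: it is an identity in the formal variables $s,u_1,\dots,u_r,v_1,\dots,v_q$, so it specializes freely. In that lemma, set $s=0$ and rename the elementary symmetric polynomials: write $a_k = (-1)^k e_k(u)$ (so $a_0=1$) and $b_\ell = e_\ell(v)$ (so $b_0=1$). Then $A(t)=\sum_{k=0}^r a_k t^{r-k} = \prod_{i=1}^r (t-u_i)$ is, up to sign, the polynomial with roots $u_i$, and $B(t)=\sum_{\ell=0}^q b_\ell t^{q-\ell}=\prod_{j=1}^q(t+v_j)$ is monic with roots $-v_j$. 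The left-hand side $\prod_{i,j}(u_i+v_j) = \prod_{i,j}(u_i-(-v_j))$ is exactly $\res(A(t),B(t),t)$ since $A$ has leading coefficient $1$ (after the sign bookkeeping) — or more precisely $\prod_{i,j}(u_i+v_j)=(-1)^{rq}\res(\dots)$ or $\res(\dots)$ depending on which root convention one fixes; one must track this sign carefully. Meanwhile the right-hand side of Lemma~\ref{lm-0} with $s=0$ becomes $\det\left(\sum_{k=0}^r e_k(u)\,\Lambda(e(v))^{r-k}\right) = \det\left(\sum_{k=0}^r (-1)^k a_k \Lambda(b)^{r-k}\right)$, since $e_k(u)=(-1)^k a_k$ and $\Lambda(e(v))=\Lambda(b)$ with $b_1,\dots,b_q$ in the first column.

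The key remaining point is that a general monic $B(t)=\sum_{\ell=0}^q b_\ell t^{q-\ell}$ can be written as $\prod_{j=1}^q(t+v_j)$ for suitable $v_j$ (its negated roots, over an algebraic closure), so the constraint $b_\ell=e_\ell(v)$ is no loss of generality; and a general $A(t)=\sum_{k=0}^r a_k t^{r-k}$ with $a_0\ne 0$ can be normalized, while the $a_0=0$ or non-monic $A$ case follows by a continuity/Zariski-density argument since both sides are polynomials in the coefficients $a_k,b_\ell$. Actually, inspecting the claim, $A$ is allowed to be arbitrary (not monic, $a_0$ need not be $1$): both $\res(A,B,t)$ and $\det(\sum (-1)^k a_k \Lambda(b)^{r-k})$ are polynomial in $a_0,\dots,a_r,b_1,\dots,b_q$, so it suffices to verify the identity on the Zariski-dense set where $a_0\ne 0$ and $A$ has distinct roots, i.e. where $A(t)=a_0\prod(t-\alpha_i)$. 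There the resultant is $\res(A,B,t)=a_0^q\prod_i B(\alpha_i)$, and one checks this matches $\det(\sum(-1)^k a_k \Lambda(b)^{r-k})$: writing $a_k = a_0\,(-1)^k e_k(\alpha)$, the determinant becomes $\det(a_0\sum_k e_k(\alpha)\Lambda(b)^{r-k}) = a_0^q\det(\sum_k e_k(\alpha)\Lambda(b)^{r-k})$, and now Lemma~\ref{lm-0} at $s=0$ with $u_i=\alpha_i$, $v_j$ the negated roots of $B$, gives $\det(\sum_k e_k(\alpha)\Lambda(b)^{r-k}) = \prod_{i,j}(\alpha_i+v_j) = \prod_i B(\alpha_i)$ since $B(\alpha_i)=\prod_j(\alpha_i+v_j)$. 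This closes the argument.

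The main obstacle I expect is the careful sign and normalization bookkeeping: matching the root conventions of Lemma~\ref{lm-0} (where $C(\mathcal{F};t)=\prod(t+\beta_j)$ uses negated roots) with the resultant's conventions (where $A(t)=a_0\prod(t-\alpha_i)$, $B(t)=b_0\prod(t-\beta_j)$ use the actual roots), and ensuring no stray $(-1)^{rq}$ or $b_0^r$ factor is dropped. The monicity hypothesis $b_0=1$ is precisely what makes the Sylvester matrix on the right reduce to the $q\times q$ determinant $\det(\sum(-1)^k a_k \Lambda(b)^{r-k})$ rather than a larger $(r+q)\times(r+q)$ one, and one should double-check the $q\times q$ size of $\Lambda(b)$ is consistent with $B$ having degree $q$. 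A clean way to present the proof is therefore: (i) reduce to $a_0\ne0$, $A$ separable by Zariski density; (ii) factor out $a_0^q$ from the determinant; (iii) apply Lemma~\ref{lm-0} with $s=0$; (iv) recognize $\prod_i B(\alpha_i)$ and invoke $\res(A,B,t)=a_0^q\prod_i B(\alpha_i)$ from the resultant subsection. No step requires more than the already-established Lemma~\ref{lm-0} and the standard resultant identities quoted in the paper.
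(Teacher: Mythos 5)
Your proposal is correct and follows essentially the same route as the paper: specialize Lemma~\ref{lm-0} at $s=0$ with $u_{i}=\alpha_{i}$ and $v_{j}$ the negated roots of $B$ (so $\Lambda(e(v))=\Lambda(b)$), pull the factor $a_{0}^{q}$ out of the $q\times q$ determinant via $a_{k}=a_{0}(-1)^{k}e_{k}(\alpha)$, and match with $\res(A,B,t)=a_{0}^{q}\prod_{i}B(\alpha_{i})$. Your added Zariski-density reduction to $a_{0}\neq 0$ with distinct roots is a harmless refinement of what the paper does implicitly by working with formal roots.
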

\begin{proof}
Assume that $\alpha_{1},\dots ,\alpha_{q}$ and $\beta_{1},\dots ,\beta_{q}$ are the roots of $A(t)$ and $B(t)$, respectively. Then $A(t) = a_{0}\prod_{i=1}^{r} (t - \alpha_{i})$, hence $e_{k}(\alpha) = (-1)^{k}\frac{a_{k}}{a_{0}}$ for $k=0,1,\dots ,r$. 
Similarly, $B(t) = \prod_{j=1}^{q} (t-\beta_{j})$, hence $e_{\ell}(\beta) = (-1)^{\ell} b_{\ell}$ for $\ell = 0,1,\dots ,q$. 
Substituting $s=0$, $u_{1} = \alpha_{1},\dots ,u_{r} = \alpha_{r}$, $v_{1}=-\beta_{1},\dots ,v_{q} = -\beta_{q}$ in Lemma \ref{lm-0} yields
\begin{gather*}
\res(A(t), B(t),t) = a_{0}^{q}\prod_{i=1}^{r}\prod_{j=1}^{q}(\alpha_{i}-\beta_{j})
=
a_{0}^{q} \det\left( \sum_{k=0}^{r} e_{k}(\alpha) \Lambda(e(-\beta))^{r-k} \right)
=
\\
=
a_{0}^{q} \det\left( \sum_{k=0}^{r} (-1)^{k}\frac{a_{k}}{a_{0}} \Lambda(b)^{r-k} \right)
= 
\det\left( \sum_{k=0}^{r}(-1)^{k}a_{k}\Lambda(b)^{r-k} \right).
\qedhere
\end{gather*}
\end{proof}

\appendix
\section{Performance of the implementations}
\label{s:tests}

By the results of \cite{iena2016different} (comparison table on page 3)   the multiplicativity of the Chern character implementation is the fastest among the four methods mentioned in Subsection \ref{ss:existing methods}, therefore we will test our methods against  it.
We use the test function $TST(N)$ provided in Appendix A of \cite{iena2016different}, which computes  the Chern classes of tensor products $\mathcal{E}\otimes \mathcal{F}$ for all pairs of vector bundles $(\mathcal{E},\mathcal{F})$  with $rk (\mathcal{E}) \cdot rk (\mathcal{F}) = N$. 
The results are shown in Figure \ref{fig:1}.

\begin{figure}[h!]
\centering
\begin{tikzpicture} 
\begin{semilogyaxis}[
	log ticks with fixed point
	,xlabel=$N$
	,ylabel=time (ms) 
	,scale only axis
	,legend pos=north west
	,xmin=1, xmax=40
	,ymin=1, ymax=1e+8
	,width=0.6*\textwidth 
	,yticklabels={1, $10^{1}$, $10^{2}$, $10^{3}$,$10^{4}$, $10^{5}$,$10^{6}$, $10^{7}$ , $10^{8}$}
	] 

\addplot [mark=*,red,thick] coordinates
{
(1,1)
(2,1)
(3,10)
(4,1)
(5,1)
(6,10)
(7,10)
(8,10)
(9,10)
(10,10)
(11,10)
(12,50)
(13,10)
(14,40)
(15,80)
(16,130)
(17,30)
(18,370)
(19,60)
(20,960)
(21,850)
(22,700)
(23,210)
(24,9850)
(25,4920)
(26,3550)
(27,14820)
(28,45160)
(29,1320)
(30,180630)
(31,2460)
(32,246120)
(33,170910)
(34,64030)
(35,935460)
(36,2450290)
(37,13470)
(38,248750)
(39,1589360)
(40,10178410)
};


\addplot[mark=triangle,green,thick] coordinates
{
(1,1)
(2,1)
(3,1)
(4,10)
(5,1)
(6,10)
(7,1)
(8,10)
(9,10)
(10,10)
(11,10)
(12,20)
(13,20)
(14,30)
(15,40)
(16,80)
(17,30)
(18,170)
(19,40)
(20,400)
(21,330)
(22,330)
(23,90)
(24,2410)
(25,960)
(26,1010)
(27,2510)
(28,6270)
(29,310)
(30,20240)
(31,440)
(32,22910)
(33,14710)
(34,8050)
(35,68810)
(36,167720)
(37,1160)
(38,22440)
(39,73930)
(40,689200)
};


\addplot[mark=o,blue,thick] coordinates {
(1,1)
(2,1)
(3,1)
(4,1)
(5,1)
(6,10)
(7,1)
(8,1)
(9,1)
(10,10)
(11,1)
(12,10)
(13,1)
(14,1)
(15,20)
(16,20)
(17,1)
(18,50)
(19,1)
(20,130)
(21,100)
(22,30)
(23,10)
(24,910)
(25,530)
(26,80)
(27,750)
(28,3140)
(29,20)
(30,11470)
(31,30)
(32,13140)
(33,4660)
(34,390)
(35,67310)
(36,121990)
(37,40)
(38,920)
(39,22320)
(40,810640)
};

\legend{
	Multiplicativity of the 
	Chern character,
	Formula \eqref{eq-C},
	 Formula \eqref{eq-tensor-Chern}
	 } 
\end{semilogyaxis} 
\end{tikzpicture} 
\caption{Time it takes to compute $TST(N)$ in \textsc{Singular 4.1.2} on MacOS with Intel i7-4770HQ CPU (running at 3.2GHz) and 16GB RAM.}
\label{fig:1}
\end{figure}
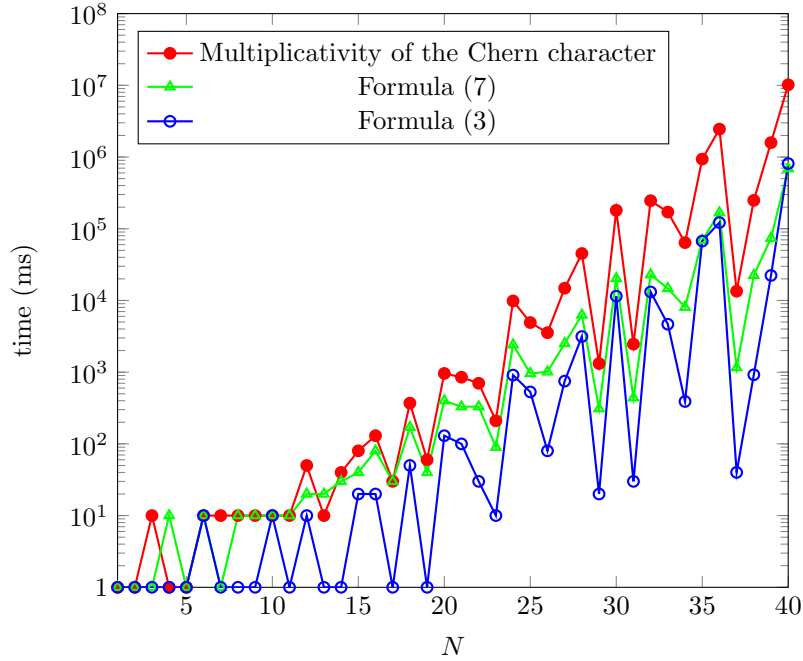

\section{Codes in Singular}

We provide the codes of our implementations. We mention that there exists a procedure in \textsc{Singular} for the resultant of two polynomials, called \texttt{resultant}, however we found it too slow, therefore we reimplemented it in our procedure \texttt{resultant2}.

\begin{verbatim}
LIB "general.lib"; // for the function binomial used in chProdRes; 
LIB "chern.lib"; // for the function chProd used in TST;

// Implementation of the first approach:

proc chProd1( int m, list x, int n, list y ) { 
    // m is rank of the bundle with x as list of Chern classes; 
    // n is rank of the bundle with y as list of Chern classes;
    int i;
    matrix  I_plus_Lambda[n][n]; 
    for(i=1; i<n; i++){
        I_plus_Lambda[i,i+1] = -1;     
    }
    for(i=1; i<=n; i++){
        I_plus_Lambda[i,1] = y[i] * t^i; // t is the variable of the Chern polynomial;
        I_plus_Lambda[i,i] = I_plus_Lambda[i,i] + 1; 
    }
    matrix S[n][n]; 
    for(i=1; i<=n; i++) {
        S[i,i]=1;  // setting S to identity matrix;
    } 
    for(i=1; i<=m; i++) {
        S =  x[i] * t^i + S * I_plus_Lambda; 
    }
    poly c_Tensor = det(S); // Chern polynomial of the tensor product;
    matrix c_Tensor_coeffs_mx = coeffs(c_Tensor,t); //coeffs of the Chern poly as matrix;
    list c_Tensor_coeffs_list; 
    for(i=1; i<=m*n; i++) {
        c_Tensor_coeffs_list[i] = c_Tensor_coeffs_mx[i+1,1]; //convert coeffs to list;
    } 
    return(c_Tensor_coeffs_list); // return the list of coefficients;
}

// Implementation of the resultant approach:

proc chProdRes( int m, list x, int n,  list y ) {
    // Resutant of polynomial A and B;
    proc resultant2 (poly A , poly B, t) {
        matrix coeff_A = coeffs(A,t); // coefficients of polynomial A;
        int p = size(coeff_A)-1; //  degree of A;
        matrix coeff_B = coeffs(B,t); // coefficients of polynomial B;
        int q = size(coeff_B)-1; // degree of B;
        matrix M[p+q][p+q];
        int i,j,k;
        k=0;
        for(i=1; i<=p; i++) {
            for(j=1; j<=q+1; j++) {
                M[k+j,i] = coeff_B[j,1];  
            }  
            k++;
        }
        k=0;
        for(i=p+1; i<=p+q; i++) {
            for(j=1; j<=p+1; j++) {
                M[k+j,i] = coeff_A[j,1];  
            }  
            k++;
        }
        return(det(M));
    }

    int i, j, k;
    poly d=0;
    poly C = t^m; // C polynomial of the first bundle;
    for(i=1; i<=m; i++) {
        C = C + x[i] * t^(m-i);
    } 
    poly D = t^n; // D polynomial of the second bundle;
    for(k=1; k<=n; k++) {
        d = binomial(n,k) * s^k;
        for(j=1; j<=k; j++) {
            d = d + binomial(n-j,k-j) * s^(k-j) * y[j];
        }
        D = D + (-1)^k * d * t^(n-k);
    }
    poly C_Tensor = resultant2(D,C,t); // C polynomial of the tensor product;
    // Computing the homogeneous parts:
    matrix C_Tensor_mx = coeffs(C_Tensor,s); // coefficients of C_Tensor;
    list C_Tensor_list;
    for(i=0; i<m*n; i++) {
    	j = m*n-i;
        C_Tensor_list[i+1] = C_Tensor_mx[j,1]; // convert coefficients to list;
    }
    return(C_Tensor_list);
}

proc PRD(int N) { 
    // generates list of all pairs (m,n) with m * n = N;
    list L, l;
    int i, j;
    for(i=1; i<=N; i++) {
        j = N div i;
        if(i*j == N) {
            l = list(j,i);
            L = insert(L,l);
        } 
    }
    return(L);
}

proc TST(int N) {
    // the test procedure that computes the Chern classes of 
    // all tensor products of rank N;
    ring r = 0,(x(1..N), y(1..N),s,t), dp;
    int j, m, n, sz;
    list L, x, y;
    string str;
    L = PRD(N); // list of pairs (m, n) with m * n=N;
    sz = size(L);
    for(j = 1; j <= sz; j++) {
        m = L[j][1];
        n = L[j][2];
        // for the multiplicativity of the Chern character:
        //chProd( m, list( x(1..m) ),  n, list( y(1..n) ) );
        // for the first approach:
        chProd1(m, list( x(1..m) ), n, list( y(1..n) ) );
        // for the resultant approach:
        //chProdRes(m, list( x(1..m) ), n, list( y(1..n) ) );
    } 
}

// Test begin
list ListOfTimes;
for(int k=1; k<=40;k++){
    timer=0;
    system("--ticks-per-sec",1000); 
    TST(k);
    ListOfTimes[k] =  timer;
    print(timer);  
}
// Test end

// Example:
int N = 10;
ring r = 0, ( x(1..N), y(1..N), t, s ), dp;
TST(N);

// Example:
list X = x(1..3); // list of Chern classes of a rank 3 bundle;
list Y = y(1..5); // list of Chern classes of a rank 5 bundle;
chProd1(3, X, 5, Y); 
chProdRes(3, X, 5, Y); 
\end{verbatim}


\bibliographystyle{abbrv}
\bibliography{Bibdesk}

\begin{thebibliography}{1}

\bibitem{cox2005using}
D.~Cox, J.~Little, and D.~OShea.
\newblock {\em Using Algebraic Geometry}, volume 185 of {\em Graduate Texts in
  Mathematics}.
\newblock New York: Springer, 2005.

\bibitem{Singular4-1-2}
W.~Decker, G.-M. Greuel, G.~Pfister, and H.~Sch\"onemann.
\newblock {\sc Singular} {4-1-2} --- {A} computer algebra system for polynomial
  computations.
\newblock http://www.singular.uni-kl.de, 2019.

\bibitem{hirzebruch1978topological}
F.~Hirzebruch, A.~Borel, and R.~Schwarzenberger.
\newblock {\em Topological methods in algebraic geometry}.
\newblock Springer, {R}eprint of the 1978 edition, 1978.

\bibitem{iena2016chernlib}
O.~Iena.
\newblock {CHERN.LIB}: a {SINGULAR} library for symbolic computations with
  {C}hern classes.
\newblock http://hdl.handle.net/10993/21949, June 2015.

\bibitem{iena2016different}
O.~Iena.
\newblock On different approaches to compute the {C}hern classes of a tensor
  product of two vector bundles.
\newblock http://orbilu.uni.lu/handle/10993/27418, 2016.

\bibitem{lascoux1978classes}
A.~Lascoux.
\newblock Classes de {C}hern d'un produit tensoriel.
\newblock {\em CR Acad. Sci. Paris}, 286:385--387, 1978.

\bibitem{macdonald1995symmetric}
I.~Macdonald.
\newblock {\em Symmetric Functions and {H}all Functions}.
\newblock Oxford University Press Oxford, {S}econd edition, 1995.

\bibitem{manivel2016chern}
L.~Manivel.
\newblock Chern classes of tensor products.
\newblock {\em International Journal of Mathematics}, 27(10):1650079, 2016.

\bibitem{bott-tu-book}
L.~W. Tu and R.~Bott.
\newblock {\em Differential forms in algebraic topology}.
\newblock Springer-Verlag, 1982.

\end{thebibliography}

\end{document}